\documentclass[10pt,a4paper]{article}

\usepackage[T1]{fontenc}
\usepackage{lmodern}
\usepackage{amsmath,amssymb,amsfonts,mathtools}
\usepackage{amsthm}
\usepackage{microtype}
\usepackage[a4paper,left=38mm,right=38mm,top=26mm,bottom=75mm]{geometry}
\usepackage{titlesec}
\usepackage{hyperref}
\hypersetup{colorlinks=true,linkcolor=blue,citecolor=blue,urlcolor=blue}
\titleformat{\section}{\normalfont\Large\bfseries}{\thesection}{0.65em}{}
\titlespacing*{\section}{0pt}{2.3ex plus .7ex minus .2ex}{1.2ex plus .2ex}
\newcommand{\dd}{\,d}

\theoremstyle{plain}
\newtheorem{theorem}{Theorem}[section]
\newtheorem{proposition}[theorem]{Proposition}
\newtheorem{lemma}[theorem]{Lemma}
\newtheorem{corollary}[theorem]{Corollary}
\theoremstyle{remark}
\newtheorem{remark}[theorem]{Remark}

\numberwithin{equation}{section}
\begin{document}

\begin{center}
\vspace*{14mm}
{\LARGE The Fefferman--Szeg\H{o} Sphericity Criterion in\\[3mm]
Complex Dimension Three\par}

\vspace{12mm}
{\large Venkata Siddharth Pendyala\textsuperscript{1*}\par}
\vspace{3mm}
{\normalsize \textsuperscript{1*}Bellevue, USA.\par}

\vspace{10mm}
{\normalsize Corresponding author(s). E-mail(s):\\
\href{mailto:venkatasiddharthpendyala@gmail.com}{venkatasiddharthpendyala@gmail.com};\par}
\end{center}

\vspace{7mm}
\begin{center}
\textbf{Abstract}
\end{center}
\noindent
We establish a Fefferman--Szeg\H{o} characterization of local CR sphericity for smoothly bounded strongly pseudoconvex domains in complex dimension three. We derive the boundary expansion of the normalized determinant of the Fefferman--Szeg\H{o} metric and prove that its second-order coefficient is a universal multiple of the squared Chern--Moser curvature. Hence, vanishing of the second-order deviation from the ball model is equivalent to local sphericity. A logarithmic stability theorem for the associated Monge--Amp\`ere determinant controls the remainder and completes the dimension-three case.

\vspace{4mm}
\noindent\textbf{Keywords:} Szeg\H{o} kernel, Fefferman measure, strongly pseudoconvex domain, CR geometry, Chern--Moser curvature, biholomorphic invariant

\vspace{4mm}
\noindent\textbf{MSC Classification:} 32A25, 32T15, 32V15, 32W20

\vspace{10mm}
\section{Introduction}

Let $\Omega\Subset\mathbb C^{n}$ be a bounded strongly pseudoconvex domain with $C^\infty$ boundary.  Fix
\[
dV=\bigwedge_{j=1}^{n}\frac{dz_j\wedge d\bar z_j}{-2\mathrm i}.
\]
For every positive defining function $s$, the Fefferman surface measure $\sigma_{\mathrm F}$ is normalized by
\begin{equation}\label{eq:Fefferman-measure}
\sigma_{\mathrm F}\wedge ds
   =J[s]^{1/(n+1)}dV
   \qquad\text{on }\partial\Omega,
\end{equation}
where $J$ is the bordered complex Monge--Amp\`ere operator in \eqref{eq:J}; the boundary form determined by \eqref{eq:Fefferman-measure} is independent of the chosen positive defining function.  We use the Hirachi--Komatsu--Nakazawa normalization, for which the diagonal Szeg\H{o} kernel has leading coefficient $(n-1)!/\pi^n$.  We write $\mathsf S_\Omega(z,w)$ for the resulting reproducing kernel and
\[
\mathsf S_\Omega(z)=\mathsf S_\Omega(z,z).
\]
The Fefferman--Szeg\H{o} metric is the K\"ahler metric
\begin{equation}\label{eq:metric-definition}
g^\Omega_{i\bar j}
 =\partial_i\partial_{\bar j}\log\mathsf S_\Omega,
\end{equation}
and its associated scalar function is
\begin{equation}\label{eq:R-definition}
\mathfrak R_\Omega
 =\frac{\det[g^\Omega_{i\bar j}]}
 {\mathsf S_\Omega^{(n+1)/n}}.
\end{equation}
Barrett and Lee introduced the invariant Szeg\H{o} metric and established its transformation law under $T$-biholomorphisms \cite{BL}.  Here a $T$-biholomorphism is a biholomorphism extending smoothly to the boundary for which the fractional complex Jacobian used in the Hardy-space isometry admits a global holomorphic branch.  Under such maps the two Jacobian weights in \eqref{eq:R-definition} cancel, so $\mathfrak R_\Omega$ is an absolute invariant; see Proposition~\ref{prop:invariance}.  Without a global branch, the singular part of the Szeg\H{o} kernel still transforms locally modulo a smooth error \cite{Hir90,HKN}, which is the level of invariance needed for the boundary coefficient computed below.

For $n\ge4$, Bhatnagar and Fan proved that
\[
\mathfrak R_\Omega=C_n+o(\rho^2),
\qquad
C_n=((n-1)!)^{-(n+1)/n}n^n\pi^{n+1},
\]
characterizes local sphericity, and they isolated $n=2,3$ as the remaining dimensions \cite[Theorem~1.4 and Question~1.5]{BF}.  Their smooth determinant calculation gives the second-order coefficient $((3-3n)/n)q$, hence $-2q$ when $n=3$.  The dimension-three analysis therefore turns on the relative logarithmic term $r^3\log r$ in the diagonal Szeg\H{o} expansion.  Its weighted size is preserved by the parabolic frame
\[
r^{1/2}T_1,\quad r^{1/2}T_2,\quad rN,
\]
which is adapted to Fefferman's invariant calculus \cite{Fef79,Gra87b}.

Section~4 proves that this logarithmic perturbation changes the normalized Monge--Amp\`ere determinant by only $O(r^3(1+|\log r|))$.  Section~5 supplies the corresponding smooth expansion with an $O(r^3)$ remainder.  Together they yield a uniform second-order formula whose coefficient is the squared Chern--Moser curvature.

\begin{theorem}\label{thm:main}
Let $\Omega\Subset\mathbb C^3$ be a bounded strongly pseudoconvex domain with $C^\infty$ boundary.  Put
\begin{equation}\label{eq:C3}
C_3=\frac{27\pi^4}{2^{4/3}}.
\end{equation}
Let $r$ be a positive Fefferman defining function in a boundary collar and let $\pi$ denote the collar projection onto $\partial\Omega$.  Then, uniformly as $r\downarrow0$,
\begin{equation}\label{eq:strong-expansion}
\mathfrak R_\Omega
 =C_3-\frac23C_3
 \|\mathcal A\circ\pi\|^2r^2
 +O\!\left(r^3(1+|\log r|)\right),
\end{equation}
where $\mathcal A$ is the Chern--Moser curvature tensor in the coefficient convention of Hirachi--Komatsu--Nakazawa.  Thus, in Chern--Moser normal coordinates and with $1\le i,j,k,l\le2$, its squared norm is the tensor-index sum
\begin{equation}\label{eq:A-normalization}
\|\mathcal A\|^2
 :=\sum_{i,j,k,l=1}^{2}
 |A^0_{ij\bar k\bar l}|^2.
\end{equation}
The components are symmetric in $i,j$ and in $k,l$.  Equivalently, if the bidegree-$(2,2)$ normal-form polynomial is written once in exponent-multiindex notation as
\[
F_{22}(z,\bar z)
 =\sum_{\substack{\mu,\nu\in\mathbb N_0^2\\|\mu|=|\nu|=2}}
 \widehat A^0_{\mu\bar\nu}z^\mu\bar z^\nu,
\]
then
\begin{equation}\label{eq:A-multiindex-conversion}
\|\mathcal A\|^2
 =\sum_{\substack{\mu,\nu\in\mathbb N_0^2\\|\mu|=|\nu|=2}}
 \frac{\mu!\,\nu!}{(2!)^2}
 |\widehat A^0_{\mu\bar\nu}|^2,
\qquad
\mu!=\mu_1!\mu_2!.
\end{equation}

Consequently, for every smooth defining function $\rho$ with $\Omega=\{\rho<0\}$ and every $p\in\partial\Omega$, the limit
\begin{equation}\label{eq:rho-limit}
\lim_{\substack{z\to p\\ z\in\Omega}}
\frac{\mathfrak R_\Omega(z)-C_3}{\rho(z)^2}
=-\frac23C_3\lambda_\rho(p)^2\|\mathcal A(p)\|^2
\end{equation}
exists, where
\begin{equation}\label{eq:lambda-rho}
\lambda_\rho(p)=\lim_{z\to p}\frac{r(z)}{-\rho(z)}>0.
\end{equation}
In particular, the following are equivalent:
\begin{enumerate}
\item $\partial\Omega$ is locally CR-equivalent to the unit sphere;
\item for every $p\in\partial\Omega$,
\[
\mathfrak R_\Omega(z)=C_3+o(\rho(z)^2)
\quad\text{as }z\to p\text{ in }\Omega.
\]
\end{enumerate}
\end{theorem}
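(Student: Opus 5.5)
The argument splits the diagonal Szeg\H{o} kernel into its smooth part and its logarithmic part, and treats each separately. By the Hirachi--Komatsu--Nakazawa form of Fefferman's expansion \cite{Fef79,HKN}, in a collar with a Fefferman defining function $r$,
\[
\mathsf S_\Omega=\frac{2}{\pi^3}\,r^{-3}\bigl(\phi+\psi\,r^{3}\log r\bigr),
\qquad \phi,\psi\in C^\infty(\overline\Omega),
\]
with $\phi=1+\phi_2r^2+O(r^3)$. In dimension three the first-order coefficient vanishes for a Fefferman defining function. The coefficient $\phi_2$ on $\partial\Omega$ is the universal multiple of $\|\mathcal A\|^2$ computed in \cite{HKN}, and the normalization \eqref{eq:A-normalization} is the one in which that constant is stated. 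The conversion \eqref{eq:A-multiindex-conversion} is only bookkeeping. Writing the symmetric tensor through its polynomial, a monomial $z^\mu\bar z^\nu$ arises from $\binom{2}{\mu}\binom{2}{\nu}$ index tuples, so each squared coefficient picks up the weight $\mu!\nu!/(2!)^2$.

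Step 1 (smooth part). Put $\mathsf S^{\rm sm}=\tfrac{2}{\pi^3}r^{-3}\phi$. Then $\log\mathsf S^{\rm sm}=-3\log r+\log\phi+\mathrm{const}$. Using the bordered identity $\det\partial\bar\partial(-\log r)=J[r]\,r^{-(n+1)}$ together with $J[r]=1+O(r^{n+1})=1+O(r^4)$, I expand $\det[\partial\bar\partial\log\mathsf S^{\rm sm}]$ in the parabolic frame $r^{1/2}T_1,r^{1/2}T_2,rN$. In this frame the term $\partial\bar\partial\log\phi$ contributes at relative order $r^2$ only through $N$-derivatives and the trace of $\phi_2$. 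This is exactly the smooth determinant calculation of \cite{BF}, specialised to $n=3$. Dividing by $(\mathsf S^{\rm sm})^{4/3}$ gives
\[
C_3\bigl(1-2\,q\,r^2\bigr)+O(r^3),
\]
where $q$ is the normalized coefficient from \cite{BF}, with $2q=\tfrac23\|\mathcal A\|^2$ in the present normalization. The constant is checked against the ball: $C_3=((2)!)^{-4/3}3^3\pi^4=27\pi^4/2^{4/3}$. I take this to be the content of Section~5.

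Step 2 (logarithmic perturbation). This is the main obstacle, and it is the content of Section~4. Adding $\psi r^3\log r$ changes $\log\mathsf S$ by $u=\log(1+\psi r^3\log r/\phi)$, which is $O(r^3|\log r|)$. Each parabolic derivative costs at most a factor $r^{-1}$ relative to the weight, while $\log r$ survives differentiation only as $|\log r|+1$. Hence $r^{1/2}T$ and $rN$ applied twice to $u$ remain $O(r^3(1+|\log r|))$ relative to the model Hessian $\partial\bar\partial(-3\log r)$, whose frame entries are $O(1)$ and uniformly positive. Multilinearity of the determinant in the frame then gives a relative change of the normalized determinant of size $O(r^3(1+|\log r|))$. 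The factor $\mathsf S^{-4/3}$ changes by the same relative amount. The delicate point is uniformity: the estimate has to hold across the collar, using only boundedness of $\phi^{-1}$ and of the frame coefficients. Combining this with Step 1 gives \eqref{eq:strong-expansion}, with $\phi_2$ replaced by $\phi_2\circ\pi+O(r)$.

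Step 3 (consequences). For an arbitrary defining function $\rho$, both $r$ and $-\rho$ are defining functions, so $r=-\lambda\rho$ with $\lambda$ smooth and positive near $\partial\Omega$. Dividing \eqref{eq:strong-expansion} by $\rho^2$ and letting $z\to p$ gives \eqref{eq:rho-limit}, using continuity of $\|\mathcal A\circ\pi\|^2$ and the bound $r(1+|\log r|)\to0$. For the equivalence, condition (2) holds exactly when $\|\mathcal A\|^2$ vanishes on $\partial\Omega$, because $\lambda_\rho>0$. In dimension three, vanishing of the Chern--Moser tensor $\mathcal A$ is equivalent to local sphericity by Chern--Moser theory. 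This proves the equivalence of (1) and (2).
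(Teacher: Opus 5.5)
Your proposal is correct and follows essentially the same route as the paper. The logarithmic term is removed by the parabolic-frame Hessian estimate (your $u=\log(1+\nu)$ is exactly $-3\sigma$ in Theorem~\ref{thm:log-stability}); the coefficient $-2q$ with $q|_{\partial\Omega}=\frac13\|\mathcal A\|^2$ comes from the Bhatnagar--Fan determinant calculation and the Hirachi--Komatsu--Nakazawa constant; and the consequences follow from $r=\lambda(-\rho)$ together with Chern--Moser. The only difference is cosmetic: the paper does the smooth step through $J[rQ_0^{-1/3}]$ and a row and column reduction of the bordered matrix (Lemma~\ref{lem:multiplication}, Proposition~\ref{prop:J-expansion}), whereas you expand $\det\partial\bar\partial\log\mathsf S$ directly in the frame, which gives the same split of $-\frac23 qr^2$ from the Hessian and $-\frac43 qr^2$ from $\mathsf S^{-4/3}$.
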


The expansion also yields the following pointwise recovery formula.

\begin{corollary}[Boundary recovery of the Chern--Moser curvature]\label{cor:recovery}
For every $p\in\partial\Omega$,
\begin{equation}\label{eq:curvature-recovery}
\|\mathcal A(p)\|^2
=-\frac{3}{2C_3\lambda_\rho(p)^2}
\lim_{z\to p}
\frac{\mathfrak R_\Omega(z)-C_3}{\rho(z)^2}.
\end{equation}
Thus $p$ is CR umbilical if and only if the limit in \eqref{eq:rho-limit} vanishes.  An open boundary set is locally spherical if and only if this limit vanishes at every point of that set.
\end{corollary}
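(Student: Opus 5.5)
The corollary should follow directly from Theorem~\ref{thm:main}, specifically from the limit formula \eqref{eq:rho-limit}. First I will fix $p\in\partial\Omega$ and a smooth defining function $\rho$. I will recall that $\lambda_\rho(p)>0$ by \eqref{eq:lambda-rho}, and that $C_3>0$ by \eqref{eq:C3}. Then I will divide both sides of \eqref{eq:rho-limit} by the nonzero constant $-\tfrac23C_3\lambda_\rho(p)^2$, which gives \eqref{eq:curvature-recovery} at once.

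For well-definedness, I will check that the right-hand side does not depend on the choice of $\rho$. If $\tilde\rho=h\rho$ with $h>0$ smooth, then $\lambda_{\tilde\rho}(p)=\lambda_\rho(p)/h(p)$ and $\tilde\rho^2=h^2\rho^2$. Hence the factors of $h(p)^2$ cancel. This consistency is already implicit in the theorem, but stating it confirms the formula is intrinsic.

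For the umbilicity statement, I will use the definition of CR umbilical: in the normal form, the bidegree-$(2,2)$ coefficient $A^0_{ij\bar k\bar l}$ of the Chern--Moser curvature vanishes at $p$. By \eqref{eq:A-normalization}, this holds exactly when $\|\mathcal A(p)\|^2=0$, since that quantity is a sum of squared moduli. By \eqref{eq:curvature-recovery}, $\|\mathcal A(p)\|^2=0$ holds if and only if the limit vanishes.

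For the local statement on an open set $U\subset\partial\Omega$, I will invoke the Chern--Moser theorem for $\dim_{\mathbb C}\ge3$ (CR dimension $2$). It says that a hypersurface is locally spherical on $U$ exactly when the Chern--Moser curvature vanishes identically on $U$. Combined with the pointwise equivalence, this gives the claim.

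The only genuinely nontrivial input is this last step, namely identifying the vanishing of $\mathcal A$ on an open set with local sphericity. I expect it to be cited rather than proved here. I will note that in CR dimension $2$ the tensor $\mathcal A$ is the full obstruction, since the Cartan-type case occurs only in $\mathbb C^2$. The dimension-three setting is therefore essential, and this is the point I would flag as the main obstacle to state correctly.
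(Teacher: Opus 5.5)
Your proposal is correct and matches the paper's proof: the recovery formula is a rearrangement of \eqref{eq:rho-limit} by the nonzero constant $-\tfrac23C_3\lambda_\rho(p)^2$; umbilicity is read off from $\|\mathcal A(p)\|^2$ being a sum of squared moduli; and the open-set statement is the Chern--Moser theorem in CR dimension two. Your extra check that the right-hand side does not depend on $\rho$ is not needed but is correct, and it agrees with the paper's comparison $r=e^\omega(-\rho)$.
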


\section{Transformation law and the Monge--Amp\`ere identity}

We begin with the invariance of the metric and its scalar quotient.

\begin{proposition}\label{prop:invariance}
Let $F:\Omega\to\widetilde\Omega$ be a $T$-biholomorphism.  Then
\begin{equation}\label{eq:S-transform}
\mathsf S_\Omega(z)
=|\det F'(z)|^{2n/(n+1)}
\mathsf S_{\widetilde\Omega}(F(z)),
\end{equation}
\begin{equation}\label{eq:g-transform}
g^\Omega=F^*g^{\widetilde\Omega},
\end{equation}
and
\begin{equation}\label{eq:R-transform}
\mathfrak R_\Omega
=\mathfrak R_{\widetilde\Omega}\circ F.
\end{equation}
\end{proposition}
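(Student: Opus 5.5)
The plan is to derive all three identities from the unitary Hardy-space isometry induced by $F$, and then propagate the kernel identity \eqref{eq:S-transform} to the metric and to the scalar quotient by direct differentiation.

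First, since $F$ is a $T$-biholomorphism, there is a global holomorphic branch $\phi=(\det F')^{n/(n+1)}$ on $\Omega$, smooth up to $\partial\Omega$. I would check that
\[
U:H^2(\partial\widetilde\Omega,\sigma_{\mathrm F})\to H^2(\partial\Omega,\sigma_{\mathrm F}),\qquad Uf=\phi\cdot(f\circ F),
\]
is unitary. This rests on the transformation law of the Fefferman measure. If $\tilde s$ is a defining function for $\widetilde\Omega$, Fefferman's transformation rule for $J$ gives that $s=|\det F'|^{-2/(n+1)}\,\tilde s\circ F$ satisfies $J[s]=J[\tilde s]\circ F$. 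Since $ds=|\det F'|^{-2/(n+1)}F^*d\tilde s$ on $\partial\Omega$ and $F^*dV=|\det F'|^2dV$, the normalization \eqref{eq:Fefferman-measure} yields
\[
F^*\sigma_{\mathrm F}^{\widetilde\Omega}=|\det F'|^{-2n/(n+1)}\dots
\]
More precisely, it yields $\sigma_{\mathrm F}^{\Omega}=|\det F'|^{-2n/(n+1)}F^*\sigma_{\mathrm F}^{\widetilde\Omega}$ with the correct sign of exponent, so that $|\phi|^2\,\sigma^\Omega_{\mathrm F}$ matches $F^*\sigma^{\widetilde\Omega}_{\mathrm F}$. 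I expect this step, keeping the exponents straight and confirming that the independence of the defining function lets me use this particular $s$, to be the main bookkeeping obstacle; the rest is formal.

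Unitarity gives $\mathsf S_\Omega(z,w)=\phi(z)\overline{\phi(w)}\,\mathsf S_{\widetilde\Omega}(F(z),F(w))$, by uniqueness of reproducing kernels: one verifies the reproducing property using $U$ and its inverse. On the diagonal, $|\phi|^2=|\det F'|^{2n/(n+1)}$, which is \eqref{eq:S-transform}.

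For \eqref{eq:g-transform}, I take $\log$ of \eqref{eq:S-transform}. The extra term $\frac{n}{n+1}\log|\det F'|^2=\frac{n}{n+1}\bigl(\log\det F'+\overline{\log\det F'}\bigr)$ is locally pluriharmonic, since $\det F'$ never vanishes. Hence it is killed by $\partial\bar\partial$. Holomorphy of $F$ gives $\partial\bar\partial(\psi\circ F)=F^*\partial\bar\partial\psi$, so $g^\Omega=F^*g^{\widetilde\Omega}$.

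For \eqref{eq:R-transform}, taking determinants of the pullback gives
\[
\det g^\Omega(z)=|\det F'(z)|^2\det g^{\widetilde\Omega}(F(z)).
\]
Also,
\[
\mathsf S_\Omega^{(n+1)/n}=|\det F'|^{2}\,\mathsf S_{\widetilde\Omega}^{(n+1)/n}\circ F
\]
from \eqref{eq:S-transform}. The factors $|\det F'|^2$ cancel in the quotient \eqref{eq:R-definition}, giving $\mathfrak R_\Omega=\mathfrak R_{\widetilde\Omega}\circ F$.
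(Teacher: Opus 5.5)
Your proposal is correct and follows the paper's route. The paper simply cites Barrett--Lee for \eqref{eq:S-transform}, whereas you reprove it through the unitary map $f\mapsto\phi\cdot(f\circ F)$ and the measure law $\sigma^{\Omega}_{\mathrm F}=|\det F'|^{-2n/(n+1)}F^*\sigma^{\widetilde\Omega}_{\mathrm F}$, whose exponents are right; your pluriharmonicity and determinant-cancellation steps for \eqref{eq:g-transform} and \eqref{eq:R-transform} are the paper's own, and you should delete the aborted first display, where the exponent has the wrong sign.
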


\begin{proof}
The kernel transformation \eqref{eq:S-transform} is the Fefferman--Hardy-space transformation law of Barrett--Lee \cite[Propositions~1--2]{BL}.  Taking logarithms, the extra term is a constant multiple of $\log|\det F'|^2$, which is pluriharmonic because $\det F'$ is holomorphic and nowhere zero.  Hence \eqref{eq:g-transform} follows after applying $\partial\bar\partial$.  In local coordinates,
\[
\det g^\Omega(z)
=|\det F'(z)|^2
\det g^{\widetilde\Omega}(F(z)),
\]
and \eqref{eq:S-transform} gives
\[
\mathsf S_\Omega(z)^{(n+1)/n}
=|\det F'(z)|^2
\mathsf S_{\widetilde\Omega}(F(z))^{(n+1)/n}.
\]
Their quotient is \eqref{eq:R-transform}.
\end{proof}

For a positive smooth function $u$ on a domain in $\mathbb C^n$, define
\begin{equation}\label{eq:J}
J[u]
=(-1)^n
\det
\begin{pmatrix}
u&u_{\bar j}\\[2mm]
u_i&u_{i\bar j}
\end{pmatrix}.
\end{equation}

\begin{lemma}\label{lem:exact-identity}
For every positive $u$,
\begin{equation}\label{eq:block-identity}
\det\bigl(\partial\bar\partial(-\log u)\bigr)
=\frac{J[u]}{u^{n+1}}.
\end{equation}
Consequently, for $u=\mathsf S_\Omega^{-1/n}$,
\begin{equation}\label{eq:R-J}
\mathfrak R_\Omega=n^nJ[u].
\end{equation}
\end{lemma}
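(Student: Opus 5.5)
The identity \eqref{eq:block-identity} is a pointwise algebraic statement, so the plan is to reduce it to a Schur-complement computation for the bordered matrix in \eqref{eq:J}. Then \eqref{eq:R-J} follows by a scaling argument with $u=\mathsf S_\Omega^{-1/n}$.

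\textbf{Step 1: the Hessian of $-\log u$.} Since $u>0$, differentiating twice gives
\[
\partial_i\partial_{\bar j}(-\log u)=\frac{u_iu_{\bar j}}{u^2}-\frac{u_{i\bar j}}{u}
=-\frac1u\Bigl(u_{i\bar j}-\frac{u_iu_{\bar j}}{u}\Bigr).
\]

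\textbf{Step 2: Schur complement.} The bracketed matrix is the Schur complement of the $(1,1)$ entry $u$ in the bordered matrix
\[
M=\begin{pmatrix}u&u_{\bar j}\\ u_i&u_{i\bar j}\end{pmatrix}.
\]
Hence $\det M=u\cdot\det\bigl(u_{i\bar j}-u^{-1}u_iu_{\bar j}\bigr)$. Taking the determinant of the $n\times n$ matrix $-u^{-1}(\cdots)$ gives
\[
\det\bigl(\partial\bar\partial(-\log u)\bigr)=\frac{(-1)^n}{u^n}\cdot\frac{\det M}{u}=\frac{J[u]}{u^{n+1}}.
\]
This proves \eqref{eq:block-identity}.

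\textbf{Step 3: the case $u=\mathsf S_\Omega^{-1/n}$.} Here $-\log u=\tfrac1n\log\mathsf S_\Omega$, so $\partial\bar\partial(-\log u)=\tfrac1n g^\Omega$. Taking determinants,
\[
\det g^\Omega=n^n\det\bigl(\partial\bar\partial(-\log u)\bigr)=n^nJ[u]\,u^{-(n+1)}=n^nJ[u]\,\mathsf S_\Omega^{(n+1)/n}.
\]
Dividing by $\mathsf S_\Omega^{(n+1)/n}$ and using the definition \eqref{eq:R-definition} gives $\mathfrak R_\Omega=n^nJ[u]$, which is \eqref{eq:R-J}.

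The only point needing care is the sign and index convention in $J$ from \eqref{eq:J}. The border row carries $u_{\bar j}$ and the border column carries $u_i$, which matches the Schur complement $u_iu_{\bar j}/u$ used in Step 2. The factor $(-1)^n$ exactly absorbs the sign of $-u^{-1}$ raised to the $n$th power. Neither step involves any analytic difficulty.
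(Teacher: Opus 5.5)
Your proof is correct and follows the paper's argument essentially line for line. You take the Schur complement of the upper-left entry $u$ to get $J[u]=(-1)^n u\det\bigl(u_{i\bar j}-u^{-1}u_iu_{\bar j}\bigr)$, extract $-u^{-1}$ from each row of the Hessian of $-\log u$, and then substitute $u=\mathsf S_\Omega^{-1/n}$ using $u^{-(n+1)}=\mathsf S_\Omega^{(n+1)/n}$ and $\det g^\Omega=n^n\det\bigl(\partial\bar\partial(-\log u)\bigr)$.
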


\begin{proof}
The Schur complement of the upper-left entry in the bordered matrix gives
\[
J[u]
=(-1)^n u\det\left(u_{i\bar j}-u^{-1}u_i u_{\bar j}\right).
\]
Since
\[
\partial_i\partial_{\bar j}(-\log u)
=-u^{-1}u_{i\bar j}+u^{-2}u_i u_{\bar j},
\]
extracting $-u^{-1}$ from each row proves \eqref{eq:block-identity}.  If $u=\mathsf S_\Omega^{-1/n}$, then $\log\mathsf S_\Omega=-n\log u$ and
\[
\det[g^\Omega_{i\bar j}]
=n^n\frac{J[u]}{u^{n+1}}.
\]
Division by $\mathsf S_\Omega^{(n+1)/n}=u^{-(n+1)}$ proves \eqref{eq:R-J}.
\end{proof}

\begin{lemma}[The ball]\label{lem:ball}
For the unit ball $\mathbb B^3$,
\begin{equation}\label{eq:ball-value}
\mathfrak R_{\mathbb B^3}\equiv C_3.
\end{equation}
\end{lemma}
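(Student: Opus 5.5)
We compute both sides explicitly, reducing everything to a single Monge--Amp\`ere evaluation through Lemma~\ref{lem:exact-identity}, and we work in general $n$ before specializing to $n=3$.

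\textbf{Step 1: the ball kernel.} The first step identifies $\mathsf S_{\mathbb B^n}$ in the Hirachi--Komatsu--Nakazawa normalization. The unitary group acts transitively on $\partial\mathbb B^n$ and preserves the Fefferman measure \eqref{eq:Fefferman-measure}. Hence $\sigma_{\mathrm F}$ is a constant multiple of Euclidean surface measure. The Szeg\H{o} kernel for such a measure is therefore a constant multiple of $(1-\langle z,w\rangle)^{-n}$, and on the diagonal
\[
\mathsf S_{\mathbb B^n}(z)=\kappa\,(1-|z|^2)^{-n}.
\]
The constant $\kappa$ is forced by the normalization fixed in the introduction, namely that the diagonal kernel has leading coefficient $(n-1)!/\pi^n$ against the defining function $1-|z|^2$. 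Since the formula above has no lower-order terms, this gives $\kappa=(n-1)!/\pi^n$ exactly.

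\textbf{Step 2: reduction to $J$ of the model defining function.} Put $c=\kappa^{-1/n}$ and $u=\mathsf S_{\mathbb B^n}^{-1/n}=c\,(1-|z|^2)$. The operator $J$ in \eqref{eq:J} is homogeneous of degree $n+1$ under constant scaling, since every entry of the bordered matrix scales by $c$. Therefore $J[u]=c^{n+1}J[1-|z|^2]$.

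For $v=1-|z|^2$ we have $v_i=-\bar z_i$, $v_{\bar j}=-z_j$ and $v_{i\bar j}=-\delta_{ij}$. The Schur-complement formula from the proof of Lemma~\ref{lem:exact-identity} gives
\[
J[v]=(-1)^n v\det\bigl(-\delta_{ij}-v^{-1}\bar z_i z_j\bigr)
     =v\det\bigl(I+v^{-1}\bar z z^{T}\bigr).
\]
The matrix $\bar z z^{T}$ has rank one, so the determinant equals $1+|z|^2/v$. Hence
\[
J[v]=v+|z|^2=1.
\]

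\textbf{Step 3: conclusion.} By \eqref{eq:R-J},
\[
\mathfrak R_{\mathbb B^n}=n^n c^{n+1}=n^n\bigl((n-1)!\bigr)^{-(n+1)/n}\pi^{n+1}=C_n,
\]
which is a constant. For $n=3$ this is $27\cdot 2^{-4/3}\pi^4$, which agrees with \eqref{eq:C3}.

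The only non-mechanical point is Step 1, namely pinning $\kappa$ exactly rather than up to a constant. Transitivity of $U(n)$ together with the normalization convention handles this. One could also cross-check $\kappa$ by integrating $|1-\langle z,w\rangle|^{-2n}$-type reproducing identities against the explicitly computed $\sigma_{\mathrm F}$, using $J[1-|z|^2]=1$ and $|d(1-|z|^2)|=2$ on the sphere.
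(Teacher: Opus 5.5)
Your proof is correct and is essentially the paper's argument. Both start from the exact ball kernel $\mathsf S_{\mathbb B^3}=\frac{2}{\pi^3}(1-|z|^2)^{-3}$, which the paper simply asserts and you justify via $U(n)$-invariance and $J[1-|z|^2]=1$, and both then evaluate the same rank-one determinant; the paper computes $\det g=27r^{-4}$ directly, while you route it through $\mathfrak R=n^nJ[u]$, which also gives $\mathfrak R_{\mathbb B^n}=C_n$ in every dimension.
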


\begin{proof}
With $r=1-|z|^2$ and $a_0=2/\pi^3$, the normalized diagonal kernel is
\[
\mathsf S_{\mathbb B^3}=a_0r^{-3}.
\]
Thus $g^{\mathbb B^3}=3\,\partial\bar\partial(-\log r)$ and
$\det g^{\mathbb B^3}=27r^{-4}$.  Hence
\[
\mathfrak R_{\mathbb B^3}
=27a_0^{-4/3}
=\frac{27\pi^4}{2^{4/3}}.
\]
\end{proof}

\section{Kernel asymptotics in dimension three}

We now fix $n=3$.  In the smooth amplitude expansions, $F=O(r^k)$ means $F=r^k\widehat F$ with $\widehat F$ smooth up to the boundary, with uniform control of the derivatives used below.  Logarithmic estimates are understood in the indicated parabolic frame.

A positive Fefferman defining function $r$ is an approximate solution of the complex Monge--Amp\`ere equation and satisfies
\begin{equation}\label{eq:Fefferman-r}
J[r]=1+O(r^4),
\end{equation}
where, in the preceding sense, $J[r]-1$ is smoothly divisible by $r^4$.  Such defining functions and their transformation properties follow from Fefferman's construction and the higher boundary analysis of Lee--Melrose and Graham \cite{Fef76,LM,Gra87a}.

The diagonal Fefferman--Szeg\H{o} kernel has the boundary form
\begin{equation}\label{eq:S-expansion}
\mathsf S_\Omega
=\frac{\varphi}{r^3}+\psi\log r
=\frac{h}{r^3},
\qquad
h=\varphi+\psi r^3\log r,
\end{equation}
with $\varphi,\psi\in C^\infty$ up to the boundary \cite{BdMS,Fef74,HKN}.  After absorbing the smooth remainder into $\varphi$, equation \eqref{eq:S-expansion} holds in a fixed boundary collar.  In our normalization,
\begin{equation}\label{eq:phi-leading}
\varphi|_{\partial\Omega}=a_0,
\qquad
a_0=\frac{2}{\pi^3}.
\end{equation}

\begin{lemma}[Dimension-three Szeg\H{o} amplitude]\label{lem:HKN-normalization}
In the normalization fixed above,
\begin{equation}\label{eq:HKN}
\varphi
=a_0\left(1+\frac13\|\mathcal A\|^2r^2\right)
+r^3\varphi_3,
\qquad
a_0=\frac{2}{\pi^3},
\end{equation}
where $\varphi_3$ is smooth and
\begin{equation}\label{eq:HKN-norm-convention}
\|\mathcal A\|^2
=\sum_{i,j,k,l=1}^{2}|A^0_{ij\bar k\bar l}|^2.
\end{equation}
The indices are ordered, with symmetry in $i,j$ and in $k,l$.  If the same bidegree-$(2,2)$ polynomial is written in exponent-multiindex form,
\[
F_{22}(z,\bar z)
=\sum_{|\mu|=|\nu|=2}
\widehat A^0_{\mu\bar\nu}z^\mu\bar z^\nu,
\]
then
\begin{equation}\label{eq:HKN-multiindex-conversion}
\|\mathcal A\|^2
=\sum_{|\mu|=|\nu|=2}
\frac{\mu!\,\nu!}{(2!)^2}
|\widehat A^0_{\mu\bar\nu}|^2.
\end{equation}
\end{lemma}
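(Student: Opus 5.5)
This lemma is essentially an import of the Hirachi--Komatsu--Nakazawa computation of the dimension-three Szeg\H{o} amplitude, rewritten in the normalization of \eqref{eq:S-expansion}. The plan is to quote HKN for the Taylor coefficients of $\varphi$ in the Fefferman defining function and then check three things: the constant, the vanishing of the first-order term, and the conversion between index conventions.

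\textbf{Step 1: the order-zero and order-one coefficients.} Write $\varphi=\varphi_0+\varphi_1 r+\varphi_2 r^2+r^3\varphi_3$ with $\varphi_k$ pulled back from $\partial\Omega$ via $\pi$; this is possible since $\varphi$ is smooth up to the boundary. Local invariance of the singular part of the kernel \cite{Hir90,HKN} holds modulo smooth errors, and $r$ is normalized by \eqref{eq:Fefferman-r} up to $O(r^4)$. Hence each $\varphi_k$ with $k\le2$ is a local CR invariant of weight $k$. Then:
\begin{itemize}
\item $\varphi_0$ is a weight-zero invariant, hence constant. Comparing with the ball, where $\mathsf S_{\mathbb B^3}=a_0r^{-3}$ exactly (Lemma~\ref{lem:ball}), gives $\varphi_0=a_0$, consistent with \eqref{eq:phi-leading}.
\item There is no nonzero local CR invariant of weight one, since the first scalar invariant built from the Chern--Moser tensor has weight two. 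Therefore $\varphi_1=0$.
\end{itemize}

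\textbf{Step 2: the second-order coefficient.} Weight-two local invariants in dimension three are spanned by $\|\mathcal A\|^2$; this is the Graham/Fefferman invariant theory classification for $n\ge3$. So $\varphi_2=c\,a_0\|\mathcal A\|^2$ for a universal constant $c$. To fix $c$, I will quote HKN's explicit formula for the Szeg\H{o} amplitude in Moser normal coordinates, where the coefficient of $r^2$ is a specific multiple of $\|A\|^2$ in their tensor convention. I will then rescale from their kernel normalization to ours.

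The rescaling is harmless. Both normalizations have leading coefficient $(n-1)!/\pi^n=a_0$, so passing between them multiplies $\varphi$ by an overall constant. The ratio $\varphi_2/\varphi_0$ is therefore preserved, and it should come out to $1/3$.

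As an independent check on $c$, I would test a perturbed ellipsoid-type domain with $F_{22}$ a single monomial, computing the kernel to second order by the Boutet de Monvel--Sj\"ostrand parametrix.

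\textbf{Step 3: the norm conversion \eqref{eq:HKN-multiindex-conversion}.} This is pure combinatorics. The symmetric tensor sum $\sum_{i,j,k,l}A_{ij\bar k\bar l}z_iz_j\bar z_k\bar z_l$ corresponds to a monomial $z^\mu\bar z^\nu$ appearing $\frac{2!}{\mu!}\frac{2!}{\nu!}$ times. Hence
\[
\widehat A^0_{\mu\bar\nu}=\frac{(2!)^2}{\mu!\nu!}A^0_{\mu\bar\nu}.
\]
Summing $|A|^2$ over ordered indices then weights each multiindex class by $\frac{(2!)^2}{\mu!\nu!}$. This yields the stated factor $\mu!\nu!/(2!)^2$.

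\textbf{Main obstacle.} The hard step is pinning down the constant $1/3$. It depends delicately on three conventions: HKN's normalization of $A$ (whether the trace-free projection or factors of $2$ or $4$ enter the normal form), their choice of $r$, and whether their expansion is in $r$ or in $-\rho$. I would first reconcile the definitions carefully against HKN's Theorem, and only fall back on the direct parametrix computation if that reconciliation is ambiguous.
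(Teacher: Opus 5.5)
Your proposal follows essentially the same route as the paper: the paper's proof quotes the Hirachi--Komatsu--Nakazawa expansion $\varphi^S=1+c_n^S\|A^0_{22}\|^2r^2+O(r^3)$ with $(n-1)(n-2)c_n^S=2/3$, sets $n=3$ to get $c_3^S=1/3$, multiplies by $a_0=2/\pi^3$, and then regroups the ordered-index sum by monomials exactly as in your Step~3. Your invariant-theoretic Steps~1--2 are correct but already contained in that cited formula, and the one concrete input you leave as ``should come out to $1/3$'' is exactly the HKN relation $(n-1)(n-2)c_n^S=2/3$ evaluated at $n=3$.
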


\begin{proof}
Hirachi--Komatsu--Nakazawa obtain
\[
\varphi^S
=1+c_n^S\|A^0_{22}\|^2r^2+O(r^3),
\qquad
(n-1)(n-2)c_n^S=\frac23.
\]
Thus $c_3^S=1/3$, and multiplication by the leading factor $(3-1)!/\pi^3=2/\pi^3$ gives \eqref{eq:HKN}.  For a multiindex $\mu$ of length two, the number of ordered strings with content $\mu$ is $2!/\mu!$; grouping the ordered-index sum by monomial therefore gives \eqref{eq:HKN-multiindex-conversion}.
\end{proof}

\begin{remark}[Normalization conventions]\label{rem:HKN-conventions}
The surface measure, bordered Monge--Amp\`ere operator, and Fefferman defining function used here coincide with those of Hirachi--Komatsu--Nakazawa.  Their formulas are \eqref{eq:Fefferman-measure}, \eqref{eq:J}, and \eqref{eq:Fefferman-r}, and their kernel carries the leading factor $(n-1)!/\pi^n$ \cite[Eq.~(1.1), Eqs.~(1.2)--(1.4), Proposition~1, Eq.~(1.7), and Theorem~1]{HKN}.
\end{remark}

Set
\begin{equation}\label{eq:Q-def}
Q=\frac{h}{a_0}.
\end{equation}
After absorbing all smooth terms divisible by $r^3$ into a smooth coefficient, equations \eqref{eq:S-expansion}--\eqref{eq:HKN} give
\begin{equation}\label{eq:Q-expansion}
Q
=1+qr^2+r^3(\widetilde q+\vartheta\log r),
\end{equation}
where $q,\widetilde q,\vartheta$ are smooth real-valued functions and
\begin{equation}\label{eq:q-boundary}
q|_{\partial\Omega}=\frac13\|\mathcal A\|^2.
\end{equation}

Introduce the normalized Szeg\H{o} defining function
\begin{equation}\label{eq:Phi}
\Phi
=\left(\frac{C_3}{27}\mathsf S_\Omega^{4/3}\right)^{-1/4}.
\end{equation}
Since $C_3/27=a_0^{-4/3}$, equations \eqref{eq:S-expansion} and \eqref{eq:Q-def} yield
\begin{equation}\label{eq:Phi-rQ}
\Phi=rQ^{-1/3}.
\end{equation}
Homogeneity of $J$ and Lemma~\ref{lem:exact-identity} give
\begin{equation}\label{eq:J-Phi-R}
J[\Phi]=\frac{\mathfrak R_\Omega}{C_3}.
\end{equation}
Thus the theorem reduces to a second-order expansion of $J[rQ^{-1/3}]$.

\section{Weighted stability of logarithmic perturbations}

We begin with an algebraic identity that removes all first derivatives of the amplitude.

\begin{lemma}\label{lem:multiplication}
Let $r>0$, $Q>0$, and $\Phi=rQ^{-1/3}$.  Then
\begin{equation}\label{eq:exact-Q}
J[\Phi]
=-Q^{-4/3}
\det
\begin{pmatrix}
r&r_{\bar\beta}\\[1mm]
r_\alpha&r_{\alpha\bar\beta}
-\dfrac13r(\log Q)_{\alpha\bar\beta}
\end{pmatrix}.
\end{equation}
\end{lemma}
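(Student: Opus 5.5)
The plan is to avoid differentiating $Q^{-1/3}$ directly and to go through the logarithmic form of the Monge--Amp\`ere operator from Lemma~\ref{lem:exact-identity}. That identity $J[u]=u^{n+1}\det(\partial\bar\partial(-\log u))$ holds for any positive $u$, and here $n=3$. Apply it with $u=\Phi=rQ^{-1/3}$, which is positive since $r>0$ and $Q>0$. The advantage is that the logarithm turns the product into a sum:
\[
-\log\Phi=-\log r+\tfrac13\log Q .
\]
Hence $\partial\bar\partial(-\log\Phi)$ is the sum of $\partial\bar\partial(-\log r)$ and $\tfrac13\partial\bar\partial\log Q$. The first derivatives of $Q$ never appear separately; they enter only through the Hessian of $\log Q$, which is exactly the structure of \eqref{eq:exact-Q}.

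\textbf{Regrouping the Hessian.} Write out
\[
\partial_\alpha\partial_{\bar\beta}(-\log r)=-r^{-1}r_{\alpha\bar\beta}+r^{-2}r_\alpha r_{\bar\beta}.
\]
Set $M_{\alpha\bar\beta}=r_{\alpha\bar\beta}-\tfrac13 r(\log Q)_{\alpha\bar\beta}$. Then
\[
\partial_\alpha\partial_{\bar\beta}(-\log\Phi)=-r^{-1}\bigl(M_{\alpha\bar\beta}-r^{-1}r_\alpha r_{\bar\beta}\bigr).
\]
Pulling the scalar $-r^{-1}$ out of each of the three rows gives
\[
\det\bigl(\partial\bar\partial(-\log\Phi)\bigr)=-r^{-3}\det\bigl(M-r^{-1}r_\alpha r_{\bar\beta}\bigr).
\]

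\textbf{Schur complement.} Reverse the Schur complement step from the proof of Lemma~\ref{lem:exact-identity}. Because the corner entry $r$ is nonzero,
\[
\det\begin{pmatrix} r& r_{\bar\beta}\\ r_\alpha & M_{\alpha\bar\beta}\end{pmatrix}=r\det\bigl(M-r^{-1}r_\alpha r_{\bar\beta}\bigr).
\]
No symmetry or positivity of $M$ is used here. Combining the pieces,
\[
J[\Phi]=\Phi^4\cdot(-r^{-4})\det\begin{pmatrix} r& r_{\bar\beta}\\ r_\alpha & M\end{pmatrix}.
\]
Substituting $\Phi^4=r^4Q^{-4/3}$ cancels the powers of $r$ and yields \eqref{eq:exact-Q}.

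\textbf{Main obstacle.} There is no real analytic difficulty here; the only risk is bookkeeping. The signs must be tracked carefully: the factor $(-1)^n=-1$ in $J$, the $(-r^{-1})^3$ from the rows, and the single minus sign in the final formula. The factor $\tfrac13$ must also land in the right place. As a check, I would verify the result on $Q\equiv1$. In that case \eqref{eq:exact-Q} must reduce to $J[r]=-\det\begin{pmatrix} r& r_{\bar\beta}\\ r_\alpha& r_{\alpha\bar\beta}\end{pmatrix}$, which is consistent with \eqref{eq:J} for $n=3$.
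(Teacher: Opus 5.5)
Your proof is correct, and it reaches \eqref{eq:exact-Q} by a different route from the paper.

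The paper works directly on the bordered determinant of $\Phi=rf$, where $f=Q^{-1/3}$ and $L=\log f$. It differentiates the product, pulls $f$ out of all four rows, and subtracts $L_\alpha$ times the first row from row $\alpha+1$. It then subtracts $L_{\bar\beta}$ times the first column from column $\bar\beta+1$. All first derivatives of $L$ cancel, leaving only $rL_{\alpha\bar\beta}$ in the Hessian block.

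You instead reuse Lemma~\ref{lem:exact-identity}. The additivity $-\log\Phi=-\log r+\frac13\log Q$ makes the disappearance of first derivatives of $Q$ automatic. Running the Schur complement backwards then reassembles a bordered matrix in which only the Hessian block is modified.

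Your version is shorter and makes the origin of the term $-\frac13 r(\log Q)_{\alpha\bar\beta}$ transparent. The sign bookkeeping reduces to the factor $(-r^{-1})^3$ and to $\Phi^4=r^4Q^{-4/3}$, and you handle both correctly.

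The paper's version never divides by $r$ or takes $\log r$. Its identity therefore holds wherever $Q>0$, with no condition on the sign of $r$. Your argument genuinely uses $r>0$, both for $\log r$ and for the Schur step, but that is in the hypothesis, so it costs nothing here. The paper's elementary row-and-column mechanism is also the one it reuses in Proposition~\ref{prop:J-expansion}, so it fits that later computation more naturally.
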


\begin{proof}
Write $f=Q^{-1/3}$ and $L=\log f$.  Since $\Phi=rf$,
\[
\Phi_\alpha=f(r_\alpha+rL_\alpha)
\]
and
\[
\Phi_{\alpha\bar\beta}
=f\left[r_{\alpha\bar\beta}+r_\alpha L_{\bar\beta}
+r_{\bar\beta}L_\alpha
+r(L_{\alpha\bar\beta}+L_\alpha L_{\bar\beta})\right].
\]
Extract $f$ from each of the four rows of the bordered determinant, subtract $L_\alpha$ times the first row from row $\alpha+1$, and subtract $L_{\bar\beta}$ times the first column from column $\bar\beta+1$.  Every first derivative of $L$ cancels, leaving
\[
J[\Phi]
=-f^4\det
\begin{pmatrix}
r&r_{\bar\beta}\\
r_\alpha&r_{\alpha\bar\beta}+rL_{\alpha\bar\beta}
\end{pmatrix}.
\]
Now $f^4=Q^{-4/3}$ and $L_{\alpha\bar\beta}=-(1/3)(\log Q)_{\alpha\bar\beta}$.
\end{proof}

The next result is stated for general logarithmic order because it identifies the precise dimensional threshold.

\begin{theorem}[Dimension-three parabolic stability for logarithmic amplitudes]\label{thm:log-stability}
Let $m\ge2$ be an integer.  In a boundary collar suppose
\begin{equation}\label{eq:general-log-Q}
Q=Q_0+r^m\vartheta\log r,
\end{equation}
where $Q_0$ and $\vartheta$ are smooth, $Q_0>0$, and
\[
Q_0-1=r^2b_0
\]
for a smooth function $b_0$ on the closed collar.  Put
\[
\Phi=rQ^{-1/3},
\qquad
\Phi_0=rQ_0^{-1/3}.
\]
Then, uniformly in the collar,
\begin{equation}\label{eq:general-log-bound}
J[\Phi]-J[\Phi_0]
=O\!\left(r^m(1+|\log r|)\right).
\end{equation}
In particular, when $m=3$,
\begin{equation}\label{eq:log-negligible}
J[\Phi]-J[\Phi_0]
=O\!\left(r^3(1+|\log r|)\right)
=o(r^2).
\end{equation}
\end{theorem}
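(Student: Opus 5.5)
The plan is to apply the exact identity of Lemma~\ref{lem:multiplication} to both $\Phi$ and $\Phi_0$ and to compare the two bordered determinants entry by entry. Because the first row and column are the same for both, namely $(r,r_{\bar\beta})$ and $(r,r_\alpha)^T$, the comparison reduces to two things: the difference of the prefactors $Q^{-4/3}-Q_0^{-4/3}$, and the difference of the lower-right blocks, which is $-\tfrac13 r\,\partial_\alpha\partial_{\bar\beta}(\log Q-\log Q_0)$. The prefactor is easy: $Q-Q_0=r^m\vartheta\log r$ and $Q_0$ is bounded below, so the prefactors differ by $O(r^m|\log r|)$. The base determinant for $\Phi_0$ is bounded, since $r(\log Q_0)_{\alpha\bar\beta}=O(r)$ when $Q_0-1=r^2b_0$. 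The real work is therefore in the block perturbation.

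Write $\log Q-\log Q_0=\log(1+r^m\vartheta\log r/Q_0)=\theta\, g(r)+E$. Here $g(t)=t^m\log t$ and $\theta=\vartheta/Q_0$ is smooth. The remainder $E$ is built from $(r^m\log r)^k$ with $k\ge2$, with smooth coefficients. Expanding the complex Hessian gives
\[
(\theta g)_{\alpha\bar\beta}=\theta g''(r)\,r_\alpha r_{\bar\beta}
+g'(r)\bigl(r_\alpha\theta_{\bar\beta}+\theta_\alpha r_{\bar\beta}+\theta r_{\alpha\bar\beta}\bigr)+g(r)\theta_{\alpha\bar\beta}.
\]
The sizes are $g=O(r^m|\log r|)$, $g'=O(r^{m-1}(1+|\log r|))$ and $g''=O(r^{m-2}(1+|\log r|))$. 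After multiplication by $r$, all terms except the first are $O(r^m(1+|\log r|))$. For the $E$ part, two derivatives cost at most $r^{-2}$, so its contribution is $O(r\cdot r^{2m-2}\log^2 r)$. This is $O(r^m(1+|\log r|))$ precisely because $2m-1\ge m+1$, which holds for $m\ge2$, and it is where that hypothesis enters.

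The main obstacle is the normal--normal term $-\tfrac13 r\theta g''(r)\,r_\alpha r_{\bar\beta}$. Its coefficient $c=-\tfrac13 r\theta g''$ is only $O(r^{m-1}(1+|\log r|))$, which is a full power of $r$ too large. This is where the parabolic weighting, with the normal direction counted with weight $r$, is used. The term $c\,r_\alpha r_{\bar\beta}$ in row $\alpha+1$ is removed by subtracting $c\,r_\alpha$ times the first row $(r,r_{\bar\beta})$. That operation cancels $c\,r_\alpha r_{\bar\beta}$ exactly and changes the first-column entry from $r_\alpha$ to $r_\alpha(1-cr)$. The resulting perturbation of the column has size $cr=O(r^m(1+|\log r|))$. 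Equivalently, the bordered determinant is insensitive to rank-one additions along $\partial r\otimes\bar\partial r$ except through the weight $r$ of the corner entry.

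After this row operation, every entry of the perturbed bordered matrix differs from the corresponding entry for $\Phi_0$ by $O(r^m(1+|\log r|))$. All entries of both matrices remain bounded. Multilinearity of the $4\times4$ determinant then gives a difference of $O(r^m(1+|\log r|))$, and combining this with the prefactor estimate yields \eqref{eq:general-log-bound}. Uniformity in the collar follows because all the smooth coefficients ($\theta$, $b_0$, $Q_0^{-1}$ and the derivatives of $r$) are bounded on the closed collar. The case $m=3$ is immediate, since $r^3(1+|\log r|)=o(r^2)$.
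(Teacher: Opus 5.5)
Your proposal is correct, and it reaches the estimate by a different route from the paper.

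\textbf{The paper's route.} The paper does not use Lemma~\ref{lem:multiplication} in this proof. It writes $J[\Phi]=\Phi^4\det\partial\bar\partial(-\log\Phi)$ and estimates $\partial\bar\partial\sigma$, $\sigma=\log(\Phi/\Phi_0)$, in the parabolic frame $Y_a=r^{1/2}T_a$, $Y_0=rN$, where every scaled component is $O(r^m(1+|\log r|))$. It uses strong pseudoconvexity to make the scaled matrix $M_0$ of $\partial\bar\partial(-\log\Phi_0)$ uniformly positive definite. This gives the relative estimate $J[\Phi]/J[\Phi_0]=(1+\nu)^{-4/3}\det(I-M_0^{-1}E)=1+O(r^m(1+|\log r|))$. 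A separate two-sided bound on $J[\Phi_0]$ then turns the ratio into a difference.

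\textbf{Your route.} You stay in coordinates with the bordered matrix of Lemma~\ref{lem:multiplication}. There the first derivatives of $Q$ are already gone and the border $(r,r_{\bar\beta})$, $(r,r_\alpha)^T$ is common to $\Phi$ and $\Phi_0$. You isolate the one oversized term $-\frac13 r\theta g''(r)\,r_\alpha r_{\bar\beta}$ and absorb it by subtracting $c\,r_\alpha$ times the border row. The only cost is the factor $1-cr=1+O(r^m(1+|\log r|))$ in the first column. Plain additive multilinearity with bounded entries then finishes the proof. That row operation is the algebraic content of weighting the normal direction by $r$: in Schur-complement form the added rank-one term just replaces $r^{-1}$ by $r^{-1}(1-cr)$.

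\textbf{What each route buys.} Yours needs no frame, no strong pseudoconvexity, and no lower bound on $\det M_0$ or $J[\Phi_0]$; only boundedness of entries is used, and of $Q_0$ only smoothness and positivity. It also uses the same bordered row-operation technique as Proposition~\ref{prop:J-expansion}, so both halves of Corollary~\ref{cor:exact-expansion} could run in a single framework. The paper's route gives a frame-invariant relative estimate instead. It also makes explicit, component by component, the parabolic bookkeeping behind Remark~\ref{rem:threshold}.

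\textbf{One addition needed.} Shrink the collar so that $|\nu|\le\frac12$, as the paper does. This makes $\log Q$, the lower bound on $Q$ in your prefactor estimate, and your bound $rE_{\alpha\bar\beta}=O(r^{2m-1}(1+|\log r|)^2)$ legitimate. That last bound is cleanest via the chain rule for $h(\nu)=\log(1+\nu)-\nu$, namely $E_{\alpha\bar\beta}=h'(\nu)\nu_{\alpha\bar\beta}+h''(\nu)\nu_\alpha\nu_{\bar\beta}$.
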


\begin{proof}
All estimates are local and uniform; a finite cover of the compact boundary gives the global collar statement.  Shrink the collar so that $0<r<e^{-1}$ and $Q_0$ is bounded away from zero.  Choose smooth $(1,0)$ fields $T_1,T_2$ spanning the complex tangent bundle of the level sets of $r$, and choose a transverse field $N$ with $Nr=1$.  For example, relative to Euclidean coordinates one may take
\[
N=\frac{1}{|\partial r|^2}
\sum_{j=1}^3r_{\bar j}\frac{\partial}{\partial z_j},
\]
then complete it to a smooth local frame.  Since $\dd r\ne0$ on the boundary, the frame coefficients and all derivatives needed below are uniformly bounded after shrinking the collar.

Define the pointwise parabolic frame
\begin{equation}\label{eq:scaled-frame}
Y_a=r^{1/2}T_a\quad(a=1,2),
\qquad
Y_0=rN,
\end{equation}
and put $\Lambda(r)=1+|\log r|$.  Let
\[
f_m(r)=r^m\log r,
\qquad
a=\frac{\vartheta}{Q_0},
\qquad
\nu=f_m(r)a.
\]
Then $Q/Q_0=1+\nu$ and
\begin{equation}\label{eq:sigma-def}
\sigma:=\log\frac{\Phi}{\Phi_0}
=-\frac13\log(1+\nu).
\end{equation}
After another shrinking, $|\nu|\le1/2$.  Direct differentiation gives
\begin{equation}\label{eq:fm-bounds}
|f_m|\le Cr^m\Lambda,
\qquad
|f_m'|\le Cr^{m-1}\Lambda,
\qquad
|f_m''|\le Cr^{m-2}\Lambda.
\end{equation}
As an identity of $(1,1)$-forms,
\begin{align}
\partial\bar\partial\nu
={}&f_m''a\,\partial r\otimes\bar\partial r
+f_m'a\,\partial\bar\partial r \notag\\
&+f_m'\bigl(\partial a\otimes\bar\partial r
+\partial r\otimes\bar\partial a\bigr)
+f_m\,\partial\bar\partial a,
\label{eq:nu-hessian}
\end{align}
and
\begin{equation}\label{eq:nu-first}
\partial\nu=f_m'a\,\partial r+f_m\,\partial a.
\end{equation}
Since $T_ar=0$ and $Nr=1$, equations \eqref{eq:fm-bounds}--\eqref{eq:nu-first} imply
\begin{align}
(\partial\bar\partial\nu)(T_a,\overline T_b)
&=O(r^{m-1}\Lambda),\label{eq:nu-TT}\\
(\partial\bar\partial\nu)(T_a,\overline N)
&=O(r^{m-1}\Lambda),\label{eq:nu-TN}\\
(\partial\bar\partial\nu)(N,\overline N)
&=O(r^{m-2}\Lambda),\label{eq:nu-NN}
\end{align}
and
\[
(\partial\nu)(T_a)=O(r^m\Lambda),
\qquad
(\partial\nu)(N)=O(r^{m-1}\Lambda).
\]
Furthermore,
\begin{equation}\label{eq:sigma-hessian}
\partial\bar\partial\sigma
=-\frac13\left(
\frac{\partial\bar\partial\nu}{1+\nu}
-\frac{\partial\nu\otimes\bar\partial\nu}{(1+\nu)^2}
\right).
\end{equation}
The first-derivative estimates yield, in the scaled frame,
\begin{align}
(\partial\nu\otimes\bar\partial\nu)(Y_a,\overline Y_b)
 &=O(r^{2m+1}\Lambda^2),\label{eq:quadratic-TT}\\
(\partial\nu\otimes\bar\partial\nu)(Y_a,\overline Y_0)
 &=O(r^{2m+1/2}\Lambda^2),\label{eq:quadratic-TN}\\
(\partial\nu\otimes\bar\partial\nu)(Y_0,\overline Y_0)
 &=O(r^{2m}\Lambda^2).\label{eq:quadratic-NN}
\end{align}
Because $m\ge2$ and $0<r<e^{-1}$, every quantity in \eqref{eq:quadratic-TT}--\eqref{eq:quadratic-NN} is $O(r^m\Lambda)$.  The linear Hessian term in \eqref{eq:sigma-hessian}, after insertion of the scaling factors from \eqref{eq:scaled-frame}, satisfies
\begin{align*}
(\partial\bar\partial\nu)(Y_a,\overline Y_b)
 &=O(r^m\Lambda),\\
(\partial\bar\partial\nu)(Y_a,\overline Y_0)
 &=O(r^{m+1/2}\Lambda),\\
(\partial\bar\partial\nu)(Y_0,\overline Y_0)
 &=O(r^m\Lambda).
\end{align*}
Since $(1+\nu)^{-1}$ and $(1+\nu)^{-2}$ are uniformly bounded, equation \eqref{eq:sigma-hessian} gives
\begin{equation}\label{eq:sigma-scaled}
(\partial\bar\partial\sigma)(Y_A,\overline Y_B)
=O(r^m\Lambda),
\qquad 0\le A,B\le2,
\end{equation}
with the sharper mixed estimate $O(r^{m+1/2}\Lambda)$.

Set
\[
G_0=\partial\bar\partial(-\log\Phi_0).
\]
Because $Q_0-1=r^2b_0$ with $b_0$ smooth,
\begin{equation}\label{eq:G0-splitting}
G_0
=\frac{\partial r\otimes\bar\partial r}{r^2}
-\frac{\partial\bar\partial r}{r}
+\frac13\partial\bar\partial\log Q_0.
\end{equation}
In the scaled frame, the last term tends to zero.  If $\mathcal L_{a\bar b}=-\partial\bar\partial r(T_a,\overline T_b)|_{r=0}$ is the positive Levi matrix in our sign convention, then
\begin{align}
G_0(Y_a,\overline Y_b)
&=\mathcal L_{a\bar b}+O(r),\label{eq:G0-TT}\\
G_0(Y_a,\overline Y_0)
&=O(r^{1/2}),\label{eq:G0-TN}\\
G_0(Y_0,\overline Y_0)
&=1+O(r).\label{eq:G0-NN}
\end{align}
Strong pseudoconvexity supplies constants $c,C>0$ such that the matrix
\[
M_0=\bigl(G_0(Y_A,\overline Y_B)\bigr)_{A,B=0}^2
\]
satisfies
\begin{equation}\label{eq:M0-bounds}
cI\le M_0\le CI,
\qquad
\|M_0^{-1}\|\le C.
\end{equation}

Since $-\log\Phi=-\log\Phi_0-\sigma$, the perturbed scaled matrix is
\[
M=M_0-E,
\qquad
E_{A\bar B}=(\partial\bar\partial\sigma)(Y_A,\overline Y_B).
\]
By \eqref{eq:sigma-scaled}, $\|E\|=O(r^m\Lambda)$; hence
\begin{equation}\label{eq:det-ratio-log}
\frac{\det M}{\det M_0}
=\det(I-M_0^{-1}E)
=1+O(r^m\Lambda).
\end{equation}
This determinant ratio is independent of the common frame: if $B$ is the frame-change matrix, then $M=B^*GB$ and the factor $|\det B|^2$ cancels between numerator and denominator.  Also,
\begin{equation}\label{eq:phi-ratio-log}
\left(\frac{\Phi}{\Phi_0}\right)^4
=(1+\nu)^{-4/3}
=1+O(r^m\Lambda).
\end{equation}
The lower bound in \eqref{eq:M0-bounds} implies $\det M_0\ge c^3$.  Since $\Phi_0>0$, equation \eqref{eq:block-identity} gives $J[\Phi_0]>0$, so the following quotient is well defined.  By Lemma~\ref{lem:exact-identity} in dimension three,
\begin{equation}\label{eq:J-ratio-log}
\frac{J[\Phi]}{J[\Phi_0]}
=\left(\frac{\Phi}{\Phi_0}\right)^4
\frac{\det M}{\det M_0}
=1+O(r^m\Lambda).
\end{equation}
Moreover, $J[\Phi_0]$ is uniformly bounded above and below.  Let $X=(T_1,T_2,N)$, let $C_X$ be its coefficient matrix relative to the coordinate frame, and put $D_r=\operatorname{diag}(r^{1/2},r^{1/2},r)$.  If $G_{0,\mathrm{coord}}$ denotes the coordinate matrix of $G_0$, then
\[
M_0=D_r^*C_X^*G_{0,\mathrm{coord}}C_XD_r,
\]
and therefore, using $\Phi_0^4=r^4Q_0^{-4/3}$,
\begin{equation}\label{eq:J0-two-sided}
J[\Phi_0]
=\Phi_0^4\det G_{0,\mathrm{coord}}
=Q_0^{-4/3}\frac{\det M_0}{|\det C_X|^2}.
\end{equation}
The functions $Q_0$, $|\det C_X|$, and their reciprocals are uniformly bounded on a reduced closed collar, while \eqref{eq:M0-bounds} bounds $\det M_0$ above and below.  Hence constants $0<c_0<C_0$ exist such that
\[
c_0\le J[\Phi_0]\le C_0.
\]
Multiplication of \eqref{eq:J-ratio-log} by this two-sided bound proves \eqref{eq:general-log-bound}.
\end{proof}

\begin{remark}\label{rem:threshold}
Theorem~\ref{thm:log-stability} exhibits the dimensional threshold directly: a relative logarithmic term of order $r^m\log r$ retains parabolic order $m$ through the two derivatives entering the metric determinant, so that $m=3$ contributes $o(r^2)$ and $m=2$ contributes at the critical second order.
\end{remark}

\section{The smooth determinant calculation}

The next proposition gives the smooth second-order expansion with a uniform $O(r^3)$ remainder.  Its coefficient is the specialization to $n=3$ of the determinant computation in Bhatnagar--Fan \cite[Lemma~5.5]{BF}.

\begin{proposition}\label{prop:J-expansion}
Let $r$ satisfy \eqref{eq:Fefferman-r}, let
\begin{equation}\label{eq:Q0}
Q_0=1+qr^2+\widetilde q r^3
\end{equation}
with $q,\widetilde q$ smooth, and put $\Phi_0=rQ_0^{-1/3}$.  Then, uniformly in a boundary collar,
\begin{equation}\label{eq:J-second-order}
J[\Phi_0]
=1-2qr^2+O(r^3).
\end{equation}
\end{proposition}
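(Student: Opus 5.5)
The plan is to start from the exact identity of Lemma~\ref{lem:multiplication}, which already removes all first derivatives of the amplitude:
\[
J[\Phi_0]=-Q_0^{-4/3}\det\begin{pmatrix} r& r_{\bar\beta}\\ r_\alpha & r_{\alpha\bar\beta}-P_{\alpha\bar\beta}\end{pmatrix},
\qquad P=\tfrac13 r\,\partial\bar\partial\log Q_0 .
\]
The prefactor is immediate: $Q_0^{-4/3}=1-\tfrac43qr^2+O(r^3)$. For the determinant, I take the Schur complement of the corner entry $r$, as in the proof of Lemma~\ref{lem:exact-identity}. This writes the bordered determinant as $r\det(A-P)$ with $A=r_{\alpha\bar\beta}-r^{-1}r_\alpha r_{\bar\beta}$. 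The unperturbed value $-r\det A$ is $J[r]=1+O(r^4)$ by \eqref{eq:Fefferman-r}. So everything reduces to showing
\[
\frac{\det(A-P)}{\det A}=\det(I-A^{-1}P)=1-\tfrac23qr^2+O(r^3),
\]
since then $J[\Phi_0]=(1-\tfrac43qr^2)(1-\tfrac23qr^2)+O(r^3)=1-2qr^2+O(r^3)$.

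To estimate $A^{-1}P$ I work in the frame $(T_1,T_2,N)$ from the proof of Theorem~\ref{thm:log-stability}, with $T_ar=0$ and $Nr=1$. The determinant ratio is frame-independent, as noted there.

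\textbf{Structure of $A$.} Its tangential block is $r_{T_a\overline T_b}=-\mathcal L_{a\bar b}+O(r)$, which is invertible and bounded. Its mixed entries are bounded. Its normal entry is $A(N,\overline N)=-r^{-1}+O(1)$. Hence, by block inversion, $A^{-1}$ has tangential block $(r_{T\overline T})^{-1}+O(r)$, mixed entries $O(r)$, and normal entry $-r+O(r^2)$.

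\textbf{Structure of $P$.} Expand $\log Q_0=qr^2+sr^3$ with $s$ smooth, and differentiate using $T_ar=0$. The Hessian of $qr^2$ gives:
\begin{itemize}
\item tangential part $2qr\,r_{T_a\overline T_b}+O(r^2)$;
\item mixed part $O(r)$;
\item normal part $2q+O(r)$.
\end{itemize}
The Hessian of $sr^3$ is $O(r^2)$ tangentially, $O(r^2)$ in the mixed entries, and $O(r)$ normally. Multiplying by $r/3$:
\[
P_{T\overline T}=\tfrac23qr^2\,r_{T\overline T}+O(r^3),\qquad P_{T\overline N}=O(r^2),\qquad P_{N\overline N}=\tfrac23qr+O(r^2).
\]

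Combining the two, $A^{-1}P=O(r^2)$ entrywise. Therefore $\det(I-A^{-1}P)=1-\operatorname{tr}(A^{-1}P)+O(r^4)$. In the trace:
\begin{itemize}
\item the tangential block contributes $\operatorname{tr}\bigl((r_{T\overline T})^{-1}\cdot\tfrac23qr^2\,r_{T\overline T}\bigr)=\tfrac43qr^2$, since there are two tangential directions;
\item the normal entry contributes $(-r)(\tfrac23qr)=-\tfrac23qr^2$;
\item the cross terms are products of an $O(r)$ entry of $A^{-1}$ and an $O(r^2)$ entry of $P$, hence $O(r^3)$.
\end{itemize}
So $\operatorname{tr}(A^{-1}P)=\tfrac23qr^2+O(r^3)$, as required.

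Uniformity in the collar follows from the same finite-cover argument as in Theorem~\ref{thm:log-stability}. All remainders involve only finitely many derivatives of $r,q,\widetilde q$ on a compact closed collar.

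The main obstacle is the bookkeeping of orders in the second step. In unscaled coordinates $P$ is only $O(r)$, while $A$ has an entry of size $r^{-1}$. A naive cofactor estimate therefore loses the second-order coefficient altogether, or produces a spurious $O(r^2)$ error. The point to verify carefully is the refined block structure:
\begin{itemize}
\item the tangential Hessian of $r^2$ vanishes to first order precisely because $T_ar=0$;
\item the $r^3$ term $\widetilde q r^3$ contributes only $O(r^3)$ after multiplication by the $O(r)$ normal entry of $A^{-1}$.
\end{itemize}
I would double-check this by comparing with the frame-scaled matrix $M_0$ of \eqref{eq:G0-TT}--\eqref{eq:G0-NN}, where the same count appears as an $O(r^2)$ diagonal correction.
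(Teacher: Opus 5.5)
Your proof is correct, but it takes a different route from the paper's. Both start from Lemma~\ref{lem:multiplication}.

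The paper never inverts anything. It expands $(\log Q_0)_{\alpha\bar\beta}$ in Euclidean coordinates and uses the row and column operations with coefficients $c_\alpha$ and $d_{\bar\beta}$. These absorb the rank-one pieces $qr\,r_\alpha r_{\bar\beta}$, $q_\alpha r^2r_{\bar\beta}$, $\widetilde q r^2r_\alpha r_{\bar\beta}$ and $q_{\bar\beta}r^2r_\alpha$ into the border. What remains is the border column scaled by $1+\frac23qr^2$ and the Hessian block scaled by $1-\frac23qr^2$, up to an entrywise $O(r^3)$ error. Factoring then gives $(1+\frac23qr^2)(1-\frac23qr^2)^2=1-\frac23qr^2+O(r^4)$, which is exactly your $\det(I-A^{-1}P)$.

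\textbf{What the paper's route buys.} Every matrix stays entrywise bounded, so the remainder follows from multilinearity alone. It needs no adapted frame and no inversion of the singular form $\partial\bar\partial r-r^{-1}\partial r\otimes\bar\partial r$, with its $-r^{-1}$ normal entry. The price is a somewhat ad hoc choice of row and column operations that hides where the coefficient comes from.

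\textbf{What your route buys.} It needs the frame $(T_1,T_2,N)$ and the block-inversion bookkeeping, which you carry out correctly: $A^{-1}P=O(r^2)$ entrywise, so only the trace survives at second order. In return it explains the number. Each of the two Levi directions contributes $+\frac23qr^2$ to the trace and the normal direction contributes $-\frac23qr^2$. In dimension $n$ this gives $\frac{2(n-2)}{n}qr^2$, and together with the prefactor $-\frac{n+1}{n}$ it recovers the Bhatnagar--Fan coefficient $\frac{3-3n}{n}$ directly.

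It also unifies the smooth computation with Section~4. Since $A-P=-rG_0$ and $A=-r\,\partial\bar\partial(-\log r)$, your ratio equals $\det M_0$ divided by its value for $Q_0\equiv1$ in the parabolic frame. So your consistency check against $M_0$ is not just a heuristic: it is the same identity, and the smooth and logarithmic parts can be handled in one formalism.
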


\begin{proof}
Taylor expansion gives
\begin{equation}\label{eq:Q-power-smooth}
Q_0^{-4/3}=1-\frac43qr^2+O(r^3)
\end{equation}
and
\begin{equation}\label{eq:logQ-smooth}
\log Q_0=qr^2+\widetilde q r^3+r^4b
\end{equation}
for a smooth function $b$.  Direct differentiation gives
\begin{align}
(\log Q_0)_{\alpha\bar\beta}
={}&2q r_\alpha r_{\bar\beta}
+2r\bigl(q_\alpha r_{\bar\beta}
+q_{\bar\beta}r_\alpha
+q r_{\alpha\bar\beta}
+3\widetilde q r_\alpha r_{\bar\beta}\bigr)
+r^2E_{\alpha\bar\beta},
\label{eq:logQ-Hessian}
\end{align}
where
\begin{align}
E_{\alpha\bar\beta}
={}&q_{\alpha\bar\beta}
+3\bigl(\widetilde q_\alpha r_{\bar\beta}
+\widetilde q_{\bar\beta}r_\alpha
+\widetilde q r_{\alpha\bar\beta}\bigr)
+r\widetilde q_{\alpha\bar\beta}\notag\\
&+12b r_\alpha r_{\bar\beta}
+4r\bigl(b_\alpha r_{\bar\beta}
+b_{\bar\beta}r_\alpha
+b r_{\alpha\bar\beta}\bigr)
+r^2b_{\alpha\bar\beta}.
\label{eq:E-explicit}
\end{align}
All coefficients in \eqref{eq:E-explicit} are uniformly bounded on a sufficiently small collar.

Insert \eqref{eq:logQ-Hessian} into the bordered matrix in Lemma~\ref{lem:multiplication}.  For each $\alpha$, add
\begin{equation}\label{eq:row-operation-coefficient}
c_\alpha
=\frac23\left(qr_\alpha r+
(q_\alpha+3\widetilde q r_\alpha)r^2\right)
\end{equation}
times the first row to row $\alpha+1$, and, for each $\beta$, add
\begin{equation}\label{eq:column-operation-coefficient}
d_{\bar\beta}=\frac23q_{\bar\beta}r^2
\end{equation}
times the first column to column $\bar\beta+1$.  The row operations cancel the terms containing $qr r_\alpha r_{\bar\beta}$, $q_\alpha r^2r_{\bar\beta}$, and $\widetilde q r^2r_\alpha r_{\bar\beta}$, and the column operations cancel the remaining terms $q_{\bar\beta}r^2r_\alpha$.  The resulting matrix has the form
\begin{equation}\label{eq:transformed-bordered-matrix}
\begin{pmatrix}
r&r_{\bar\beta}\\
r_\alpha(1+\frac23qr^2)&
r_{\alpha\bar\beta}(1-\frac23qr^2)
\end{pmatrix}
+\mathcal E,
\end{equation}
where the entries of $\mathcal E$ may be read directly from the preceding operations: the upper-left entry is zero, the upper-right entries are
\[
\mathcal E_{0\bar\beta}=\frac23q_{\bar\beta}r^3,
\]
the lower-left entries are
\[
\mathcal E_{\alpha\bar0}
=\frac23(q_\alpha+3\widetilde q r_\alpha)r^3,
\]
and the lower-right entries are
\[
\mathcal E_{\alpha\bar\beta}
=-\frac13r^3E_{\alpha\bar\beta}+O(r^4).
\]
Consequently every entry of $\mathcal E$ is uniformly $O(r^3)$.

Set
\[
A=1+\frac23qr^2,
\qquad
B=1-\frac23qr^2.
\]
On each coordinate chart of a finite collar cover, the entries of $r$, its first two derivatives, and the smooth functions occurring above are uniformly bounded.  Since $A$ and $B$ remain bounded away from zero after the collar is reduced, multilinearity of the determinant and the entrywise estimate $\mathcal E=O(r^3)$ yield
\begin{align}
&\det\left[
\begin{pmatrix}
r&r_{\bar\beta}\\
Ar_\alpha&Br_{\alpha\bar\beta}
\end{pmatrix}
+\mathcal E\right]\notag\\
&\qquad
=AB^2
\det\begin{pmatrix}
r(B/A)&r_{\bar\beta}\\
r_\alpha&r_{\alpha\bar\beta}
\end{pmatrix}
+O(r^3).
\label{eq:factored-bordered-matrix}
\end{align}
Here the factor $AB^2$ follows by extracting $A$ from the first column and $B$ from the three remaining columns, and then multiplying the first row of the residual determinant by $B$.  Moreover,
\[
\frac BA=1-\frac43qr^2+O(r^4),
\]
so the upper-left entry $r(B/A)$ differs from $r$ by $O(r^3)$; another application of determinant multilinearity gives
\begin{equation}\label{eq:bordered-comparison}
\det\begin{pmatrix}
r(B/A)&r_{\bar\beta}\\
r_\alpha&r_{\alpha\bar\beta}
\end{pmatrix}
=
\det\begin{pmatrix}
r&r_{\bar\beta}\\
r_\alpha&r_{\alpha\bar\beta}
\end{pmatrix}
+O(r^3).
\end{equation}
Combining Lemma~\ref{lem:multiplication}, \eqref{eq:Q-power-smooth}, \eqref{eq:factored-bordered-matrix}, and \eqref{eq:bordered-comparison}, and using the definition of $J[r]$, we obtain
\begin{equation}\label{eq:factorization}
J[\Phi_0]
=\left(1-\frac43qr^2\right)
\left(1+\frac23qr^2\right)
\left(1-\frac23qr^2\right)^2
J[r]+O(r^3).
\end{equation}
The coefficient of $qr^2$ in this product equals
\[
-\frac43+\frac23-2\cdot\frac23=-2,
\]
and the Fefferman equation \eqref{eq:Fefferman-r}, namely $J[r]=1+O(r^4)$, completes the proof of \eqref{eq:J-second-order}.
\end{proof}

\begin{corollary}\label{cor:exact-expansion}
For $Q$ in \eqref{eq:Q-expansion} and $\Phi=rQ^{-1/3}$,
\begin{equation}\label{eq:JPhi-final}
J[\Phi]
=1-2qr^2+O\!\left(r^3(1+|\log r|)\right).
\end{equation}
Consequently,
\begin{equation}\label{eq:R-final}
\mathfrak R_\Omega
=C_3-2C_3qr^2
+O\!\left(r^3(1+|\log r|)\right).
\end{equation}
\end{corollary}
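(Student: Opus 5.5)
The corollary follows by combining Theorem~\ref{thm:log-stability} with $m=3$ and Proposition~\ref{prop:J-expansion}, then converting back to $\mathfrak R_\Omega$ through \eqref{eq:J-Phi-R}.

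Start from \eqref{eq:Q-expansion} and split $Q=Q_0+r^3\vartheta\log r$ with
\[
Q_0=1+qr^2+\widetilde q r^3 .
\]
This $Q_0$ is exactly of the form \eqref{eq:Q0}. It also satisfies the hypothesis of Theorem~\ref{thm:log-stability}, since $Q_0-1=r^2b_0$ with $b_0=q+\widetilde q r$ smooth. After shrinking the collar, $Q_0>0$ because $Q_0\to1$ at the boundary.

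Next, apply the two earlier results.
\begin{itemize}
\item Theorem~\ref{thm:log-stability} with $m=3$ gives $J[\Phi]-J[\Phi_0]=O(r^3(1+|\log r|))$.
\item Proposition~\ref{prop:J-expansion}, which uses the Fefferman equation \eqref{eq:Fefferman-r} for $r$, gives $J[\Phi_0]=1-2qr^2+O(r^3)$.
\end{itemize}
Since $O(r^3)\subset O(r^3(1+|\log r|))$, adding the two estimates yields \eqref{eq:JPhi-final}.

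Finally, \eqref{eq:J-Phi-R} says $\mathfrak R_\Omega=C_3J[\Phi]$, where $\Phi$ is the normalized Szeg\H{o} defining function \eqref{eq:Phi}. By \eqref{eq:Phi-rQ}, $\Phi=rQ^{-1/3}$ with this same $Q$. Multiplying \eqref{eq:JPhi-final} by the constant $C_3$ gives \eqref{eq:R-final}.

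\textbf{Main point to check.} The only step needing care is that the decomposition is legitimate: $Q$ in \eqref{eq:Q-expansion} must be exactly $Q_0+r^3\vartheta\log r$ with $\widetilde q$ and $\vartheta$ smooth up to the boundary. This holds because in \eqref{eq:S-expansion} all smooth terms divisible by $r^3$ were absorbed into $\widetilde q$.

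A second point is that both estimates must hold uniformly on a common collar. Each is stated uniformly, so one simply takes the intersection of the two collars, which is again a collar.
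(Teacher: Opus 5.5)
Your proposal is correct and follows the paper's proof exactly. You split off the $r^3\vartheta\log r$ term, remove it with Theorem~\ref{thm:log-stability} at $m=3$, expand $J[\Phi_0]$ by Proposition~\ref{prop:J-expansion}, and pass to $\mathfrak R_\Omega$ through \eqref{eq:J-Phi-R}; your explicit checks that $Q_0-1=r^2(q+\widetilde q\,r)$, that $Q_0>0$ on a shrunken collar, and that both uniform estimates hold on a common collar are hypotheses the paper's one-line proof leaves implicit.
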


\begin{proof}
Apply Theorem~\ref{thm:log-stability} with $m=3$ to remove the logarithmic term, then use Proposition~\ref{prop:J-expansion}.  Equation \eqref{eq:R-final} follows from \eqref{eq:J-Phi-R}.
\end{proof}

\section{Proof of the characterization}

\begin{proof}[Proof of Theorem~\ref{thm:main}]
Because $q$ is smooth,
\[
q(z)=q(\pi(z))+O(r(z)).
\]
Substituting \eqref{eq:q-boundary} into \eqref{eq:R-final} therefore gives the uniform expansion
\[
\mathfrak R_\Omega(z)
=C_3-\frac23C_3
\|\mathcal A(\pi(z))\|^2r(z)^2
+O\!\left(r(z)^3(1+|\log r(z)|)\right),
\]
which is \eqref{eq:strong-expansion}.

Let $\rho$ be any smooth defining function with $\Omega=\{\rho<0\}$.  Since $r$ and $-\rho$ are positive defining functions for the same boundary, there is a smooth real function $\omega$ in a collar such that
\begin{equation}\label{eq:defining-comparison}
r=e^\omega(-\rho).
\end{equation}
Thus $\lambda_\rho(p)=e^{\omega(p)}$, and division of \eqref{eq:strong-expansion} by $\rho^2$ gives \eqref{eq:rho-limit}; the remainder tends to zero because
\[
\frac{r^3(1+|\log r|)}{\rho^2}
=e^{2\omega}r(1+|\log r|)\longrightarrow0.
\]

The coefficient in \eqref{eq:rho-limit} is nonzero and $\lambda_\rho(p)>0$, and therefore the limit vanishes at every boundary point exactly when
\[
\mathcal A\equiv0\quad\text{on }\partial\Omega.
\]
The Chern--Moser equivalence theorem states that, for a strongly pseudoconvex hypersurface of CR dimension at least two, the vanishing of the Chern--Moser curvature tensor on a neighbourhood of a point is equivalent to local CR flatness there, and local CR flatness is equivalent to local CR equivalence with the unit sphere \cite{CM}.  The hypothesis at every boundary point therefore implies that the smooth tensor $\mathcal A$ vanishes identically on $\partial\Omega$; it consequently vanishes on a neighbourhood of each point within $\partial\Omega$, and the Chern--Moser equivalence theorem gives local sphericity throughout the boundary.
\end{proof}

\begin{proof}[Proof of Corollary~\ref{cor:recovery}]
Equation \eqref{eq:curvature-recovery} is a rearrangement of \eqref{eq:rho-limit}.  A point is CR umbilical exactly when $\mathcal A(p)=0$.  On an open set, the tensor vanishes identically exactly when the Chern--Moser normal form is locally spherical at every point.
\end{proof}

\begin{remark}[The logarithmic order in dimension two]\label{rem:n2}
In complex dimension two,
\[
\mathsf S_\Omega
=\frac{\varphi}{r^2}+\psi\log r
=\frac{\varphi+\psi r^2\log r}{r^2}.
\]
The normalized amplitude therefore contains a logarithmic perturbation of order $m=2$.  Theorem~\ref{thm:log-stability} bounds its determinant contribution by $O(r^2(1+|\log r|))$, so it can contribute at the second boundary order.  Further information about the logarithmic coefficient is required to decide the corresponding criterion in that dimension.
\end{remark}

\section*{Statements and Declarations}

\textbf{Funding.} The author received no external funding for this work.

\textbf{Competing interests.} The author declares that there are no financial or non-financial interests directly or indirectly related to the work submitted for publication.

\textbf{Data availability.} No datasets were generated or analysed in the course of this theoretical work.

\textbf{Author contributions.} Venkata Siddharth Pendyala is the sole author and is responsible for the conception, proof, exposition, and final approval of the manuscript.

\end{document}